\newcommand{\R}{\mathbb{R}}
\newcommand{\eps}{\varepsilon}
\newtheorem{theo}{\bf Theorem}[section]
\newtheorem{lem}[theo]{\bf Lemma}
\newtheorem{cor}[theo]{\bf Corollary}
\newtheorem{defi}[theo]{\bf Definition}
\theoremstyle{remark}
\newtheorem{rem}[theo]{Remark}
\numberwithin{equation}{section}
\begin{document}

\title{Coercive Hamilton-Jacobi equations in domains:\\
the twin blow-ups method}
\author{Nicolas Forcadel, Cyril Imbert et Régis Monneau}
\date{\today}

\maketitle

\begin{abstract}
  In this note, we consider an evolution coercive Hamilton-Jacobi equation posed in a domain and supplemented with a boundary condition.
  We are interested in proving a comparison principle in the case where the time and the (normal) gradient variables are strongly coupled at the boundary.
  We elaborate on a method introduced by P.-L. Lions and P. Souganidis (\textit{Atti Accad. Naz. Lincei}, 2017) to extend their comparison principle to more general boundary conditions
  and to Hamiltonians that are not globally Lipschitz continuous in the time variable.
  Their argument relies on a {single} blow-up procedure after rescaling the semi-solutions to be compared. We refer to our technique as the \emph{twin blow-ups method}
  since two blow-ups are performed simultaneously,  one for each  variable of the doubling variable method.
  The Lipschitz regularity of the regularized subsolution provides a key Lipschitz inequality satisfied by the two blow-up limits, that are a priori allowed to be infinite.
  For expository reasons, the result is presented here in the framework of space dimension one and the general case is treated in a companion paper.
\end{abstract}


\bigskip

\section{Introduction: a comparison principle}

{Given $T>0$, we} consider viscosity solutions of a standard evolutive Hamilton-Jacobi equation posed {in the geometric setting of  a domain} $\Omega:=(0,+\infty)$,
\begin{equation}\label{eq:hj}
u_t + H(X,u_x)=0 \quad \text{in}\quad (0,T)\times \Omega
\end{equation}
where $X:=(t,x)$, supplemented with the boundary condition,
\[
  u_t+F(X,u_x)=0  \quad \text{in}\quad (0,T)\times \partial \Omega
\]
and the initial condition,
\[
u(0,\cdot)=u_0 \quad \text{in}\quad \left\{0\right\}\times \overline \Omega.
\]
Since the boundary condition can be lost when characteristics reach $\partial \Omega$, it has to be imposed in a weak sense. 
When working with viscosity solutions, a classical way to handle this discrepancy is to impose that
either the boundary condition {or} the equation is satisfied (in the viscosity sense) at the boundary,
\begin{equation}\label{eq:bc-w}
\left\{\begin{array}{lll}
u_t +\min\left\{F,H\right\}(X,u_x) \le 0 &\quad \text{in}\quad (0,T)\times \partial \Omega&\quad \text{(for subsolutions),}\\
u_t +\max\left\{F,H\right\}(X,u_x) \ge 0 &\quad \text{in}\quad (0,T)\times \partial \Omega &\quad \text{(for supersolutions).}
\end{array}\right.
\end{equation}

Comparison principles are strong uniqueness results for Hamilton-Jacobi equations. In the case of the previous initial boundary value problem, it is known (see \cite{L,B,B2,LS2,BC})
that it is difficult to treat the case when tangential coordinates, such as the time variable $t$, and the normal derivative $u_x$ of the solution, are strongly coupled {reaching} the boundary $(0,T)\times \partial \Omega$.
It is standard to make the (strong) assumption of uniform continuity in time $t$, uniformly in the gradient $u_x$. Such an assumption is not satisfied by the following simple example,
\begin{equation} \label{eq:hj-ex}
  \begin{cases}
u_t + a(t,x)|u_x|=0 & \quad \text{in}\quad (0,T)\times  \Omega,\\
u_t - b(t,x)u_x=0 &\quad \text{in}\quad (0,T)\times \partial \Omega
\end{cases}
\end{equation}
when $a,b\ge 1$ are bounded Lipschitz continuous functions {(here with $b(t,x)=b(t,0)$)}.

In this note, we choose structural assumptions on $H$ and $F$ that encompass a large variety of examples (including \eqref{eq:hj-ex}) but
we do not seek for generality to avoid  technicalities in proofs as much as possible. 
We assume that $H$ and $F$ are continuous in $X$, Lipschitz continuous and (semi-) coercive in $p$, with a time dependance allowing strong coupling with the gradient variable. 
Precisely, we assume that there exists a  constant $C>0$ such that,
\begin{equation}\label{assum:H}
\begin{cases}
 H \text{ is continuous and } |H(X,0)| \le C \text{ and } |H(X,p)-H(X,q)| \le C|p-q| \\
 H(X,p) \to + \infty \text{ as } |p|\to \infty \text{ uniformly in } X \\
  |H(s,x,p)-H(t,x,p)| \le C|t-s|\left(1+\max(0,H)(t,x,p) \right)
\end{cases}
\end{equation}
 \begin{equation}\label{assum:F}
 \begin{cases}
 F  \text{ is continuous and } |F(X,0)| \le C \text{ and } |F(X,p)-F(X,q)| \le C|p-q|\\
F(X,p) \to +\infty \text{ as } p \to -\infty \text{ uniformly in } X \\
 |F(s,x,p)-F(t,x,p)| \le C|t-s|\left(1+\max(0,F,H)(t,x,p)\right).
\end{cases}
\end{equation}
We make artificially appear the dependence of $F$ on $x\in \overline \Omega$  in order to unify the presentation of assumptions for $H$ and $F$.

By regularizing  the subsolution by tangential sup-convolution, and \emph{without regularizing the supersolution}, we are able to prove the following
new comparison principle. 
\begin{theo}[A comparison principle with strong time coupling]\label{th:comparison}
Let $T>0$, assume that \eqref{assum:H} and \eqref{assum:F} hold true and that $u_0$ is bounded and Lipschitz continuous.
Let $u:[0,T)\times \overline \Omega \to \R$ (resp. $v$) be a bounded upper semi-continuous viscosity subsolution (resp. bounded lower semi-continuous viscosity supersolution)
of \eqref{eq:hj}-\eqref{eq:bc-w}. If
\(u(0,\cdot)\le u_0 \le v(0,\cdot)\) in $\overline \Omega$, then 
\(u\le v\) in \([0,T)\times \overline \Omega\).
\end{theo}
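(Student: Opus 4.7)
The plan is to argue by contradiction via the doubling-of-variables strategy, regularizing only the subsolution by tangential sup-convolution and then performing two simultaneous blow-ups at the doubling maximum. After the standard reduction $u\leftarrow u-\eta/(T-t)$ forcing a strict subsolution inequality and a contradiction supremum attained in a compact subset of $[0,T)\times\overline\Omega$, I would set
\[
u^\varepsilon(t,x):=\sup_{s\in[0,T)}\left\{u(s,x)-\tfrac{(t-s)^2}{2\varepsilon}\right\}.
\]
Combining the coercivity of $H$ in $p$ with the time-regularity assumption in \eqref{assum:H} ensures that $u^\varepsilon$ is globally Lipschitz in $t$ (with constant $O(1/\varepsilon)$), remains a subsolution of an $\varepsilon$-perturbed version of \eqref{eq:hj}-\eqref{eq:bc-w} whose Hamiltonians converge to $H$ and $F$, and is Lipschitz in $x$ (again by coercivity of $H$). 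No regularization is applied to the supersolution $v$.

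Next I would double variables,
\[
M_\beta:=\sup_{X,Y}\left\{u^\varepsilon(X)-v(Y)-\tfrac{|X-Y|^2}{2\beta}\right\},
\]
attained at $(X_\beta,Y_\beta)$ with the standard properties $|X_\beta-Y_\beta|^2/\beta\to 0$ and $M_\beta\to\sup(u^\varepsilon-v)>0$. The interior case $x_\beta,y_\beta>0$ is closed by the classical Crandall--Ishii argument using $H$. The genuine difficulty is the boundary case $\min(x_\beta,y_\beta)=0$, where the coupling of $t$ and $u_x$ in \eqref{assum:F} prevents a direct closing argument via the min/max form of \eqref{eq:bc-w}.

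This is where the twin blow-ups enter. With $p_\beta:=(X_\beta-Y_\beta)/\beta$ --- bounded by coercivity of $H$ --- I would rescale
\[
U^\beta(Z):=\frac{u^\varepsilon(X_\beta+\beta Z)-u^\varepsilon(X_\beta)}{\beta},\qquad V^\beta(Z):=\frac{v(Y_\beta+\beta Z)-v(Y_\beta)}{\beta}.
\]
The Lipschitz regularity of $u^\varepsilon$ ensures $U^\beta\to U$ along a subsequence to a Lipschitz limit. No such estimate is available for $V^\beta$, whose limit $V$ may take values in $[-\infty,+\infty]$. The crucial output of the doubling, obtained by applying the inequality at independently shifted pairs $(X_\beta+\beta Z_1,Y_\beta+\beta Z_2)$ and expanding the penalty, is the \emph{Lipschitz inequality}
\[
U^\beta(Z_1)-V^\beta(Z_2)\le \langle p_\beta,Z_1-Z_2\rangle+\tfrac{1}{2}|Z_1-Z_2|^2,
\]
which survives the limit and anchors the (possibly infinite) $V$ to the Lipschitz function $U$. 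The blow-up limits are sub-/super-solutions of Hamilton--Jacobi problems with coefficients frozen at the limit $(t_\infty,0)$; combined with the semi-coercivity of $F$ in \eqref{assum:F}, this yields enough control on the admissible slopes of $V$ at the origin to contradict the strict subsolution inequality inherited from the $\eta$-perturbation.

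The main obstacle will be to extract meaningful sub/super-solution information from the possibly infinite limit $V$. The authors' insight --- and the reason the method handles the generality of \eqref{assum:F} --- is that one does not need to regularize $v$ (a symmetric inf-convolution could not be performed under only \eqref{assum:F}); it suffices to anchor $V$ to the Lipschitz function $U$ through the Lipschitz inequality, which is the pivotal algebraic byproduct of the twin rescaling. Making this rigorous, in particular formulating the generalized sub/super-solution notion that survives infinite values and exploiting the one-sided coercivity of $F$, is the delicate point.
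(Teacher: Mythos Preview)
Your outline captures the philosophy (regularize only the subsolution, twin blow-ups, a Lipschitz-type inequality linking the two limits), but there is a genuine gap at the point you gloss over: the claim that $u^\varepsilon$ \emph{``remains a subsolution of an $\varepsilon$-perturbed version \dots whose Hamiltonians converge to $H$ and $F$''} is exactly where the strong time coupling hurts. The sup-convolution shifts the time argument by at most $\theta^\varepsilon\sim\sqrt{\varepsilon}$, while the time-Lipschitz constant of $u^\varepsilon$ is $\theta^\varepsilon/\varepsilon\sim 1/\sqrt{\varepsilon}$; hence along the relevant test gradients $\max(0,H)\lesssim 1/\sqrt{\varepsilon}$, and assumption \eqref{assum:H}(iii) only gives
\[
|H(t,x,p)-H(\bar t_t,x,p)|\le C\,\theta^\varepsilon\bigl(1+\max(0,H)\bigr)=O(1),
\]
not $o(1)$. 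The perturbed Hamiltonians therefore do \emph{not} converge under \eqref{assum:H}--\eqref{assum:F}, and the $\eta$-gap cannot be closed by sending $\varepsilon\to 0$. This is precisely the obstruction the paper is written to overcome; your scheme, as stated, reproduces the difficulty rather than resolves it.

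The paper's remedy is structural and differs from your setup in three linked ways. First, it doubles \emph{only in time} (parameter $\nu$), so the sup-convolution and the time-doubling are the \emph{same} operation; there is no second $\beta$-doubling. Second, the blow-up is performed with an independent scale $\varepsilon\to 0$ on the \emph{original} $u$ at $(\bar t,\bar x)$ and on $v$ at $(\bar s,\bar x)$, while an auxiliary blow-up of $U^\nu$ is used solely to transmit the Lipschitz constant to the limits; this yields the \emph{linear} key inequality $U^0(\tau,\xi)-V^0(\sigma,\zeta)\le L_\nu|\xi-\zeta|+b(\tau-\sigma)$ with $b=(\bar t-\bar s)/\nu$, free of your quadratic term. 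Third, one then takes $\sup_\tau/\inf_\sigma$ (which the linear time term permits) to reduce to a one-dimensional problem and applies the critical-slope lemma (Corollary~\ref{cor:visco-ineq}) to produce a single $p$ with $b+H(\bar t,0,p)\le 0$ and $b+H(\bar s,0,p)\ge \bar\eta$. Only now is \eqref{assum:H}(iii) invoked, with the crucial bound $\max(0,H(\bar t,0,p))\le |b|$ coming from the subsolution inequality itself, so that
\[
\bar\eta\le H(\bar s,0,p)-H(\bar t,0,p)\le C|\bar t-\bar s|\,(1+|b|)=C\Bigl(|\bar t-\bar s|+\tfrac{|\bar t-\bar s|^2}{\nu}\Bigr)\to 0,
\]
the last limit being the standard doubling estimate at a \emph{maximum} point (not the a priori bound $\theta^\nu$). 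Your proposal misses each of these three ingredients: the blow-up of $u$ rather than $u^\varepsilon$, the 1D reduction, and the precise placement of the structural hypothesis at the very end where the product $|\bar t-\bar s|\cdot|b|$ is genuinely small.
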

\begin{rem}\label{rem:: n41}
Here,  we assume for simplicity that the initial data $u_0$ is Lipschitz continuous.
With some additional (classical) technicalities, it is possible to deal with uniformly continuous $u_0$'s
and to relax the boundedness assumption on $u,v,u_0$ by imposing that they grow at most linearly.
\end{rem}
\begin{rem}\label{rem::b42}
Lipschitz continuity of $F$ with respect to $p$ is only used to get barriers in the proof of Theorem~\ref{th:comparison}.
Moreover, the monotonicity of $F$ with respect to $p$ is not used in the proof of the comparison principle. However it is required in order to ensure that classical solutions are viscosity solutions. This important and natural property is used when constructing viscosity solutions by Perron's method.
\end{rem}
\begin{rem}
Notice that, given (\ref{assum:H}), we can always define the state constraint boundary function
$$H^-(t,x,p):=\sup_{q\le p} H(t,x,q)\quad \text{for}\quad (t,x)\in [0,T]\times \partial\Omega\quad \text{and}\quad p\in \R$$
and it satisfies (\ref{assum:F}). {Up to our knowledge, the comparison principle was also an open problem for  $F=H^-$ in the generality of this note.}
\end{rem}
\begin{rem}
{Notice that in Theorem \ref{th:comparison}, when $F$ is nonincreasing in $p$, condition (\ref{assum:F}) iv) can be replaced by the weak continuity of the subsolution $u$ on the boundary $(0,T)\times \partial \Omega$, using \cite[Proposition~3.12]{FIM}) and replacing $F$ by $F_1:=\max(F,H^-)$.}
\end{rem}

\paragraph{Comparison with known results.}

J. Guerand \cite{zbMATH06731793} proved a comparison principle in our geometric setting in the case where  $H$ and $F$ {are} independent of $(t,x)$. 
She also proved a comparison principle for non-coercive Hamiltonians. 

P.-L. Lions and P. Souganidis \cite{LS2} introduced a new method to prove  comparison principle for junctions
with $N \ge 1$ branches (or half-spaces) between bounded uniformly continuous sub/supersolutions. They use
a blow-up argument that reduces the study to a 1D problem. The authors show the comparison principle in the case of Kirchoff-type boundary conditions
and non-convex Hamiltonians.
As far as $(t,x)$ dependence is concerned, these authors can handle Hamiltonians that are
Lipschitz in $t$, see  \cite[Assumption (4)]{LS2}.

 This result is generalized by G. Barles and E. Chasseigne \cite[Theorem 15.3.7, page 295]{BC} to the case of bounded semi-continuous sub/supersolutions under 
three different junction conditions. Even if they are presented for $N = 2$ branches, we present their results in our geometric setting: 
a junction reduced to a single branch $N = 1$ in 1D. 
The three cases are the following: $F$ is constant in \eqref{eq:bc-w}, the Neumann problem, and general nonincreasing continuous $F$.
 In the third case, the normal derivative is not coupled with the tangential coordinates in $F$ (see also the very end of  \cite[\S 13.2.2 and condition (GA-G-FLT) p. 247]{BC}).

As explained above, we improve these results in the case where the functions $H$ and $F$ imply a strong coupling of the time variable with the normal derivative of the solution. 
 
\paragraph{Organization of the note.}
In Section~\ref{s:boundary} we recall the definition of (limiting) semi-differentials and we state and prove a technical lemma relating some slopes (that we call critical)
at the boundary with semi-differentials (Lemma~\ref{lem:limit-diff}).
We deduce from this technical lemma a corollary about critical slopes of stationary semi-solutions of the boundary value problem (Corollary~\ref{cor:visco-ineq}). In Section~\ref{s:barriers}, we state a {barrier result (Lemma~\ref{lem:barriers})} that helps us dealing with the initial time condition; we also state a {result (Lemma~ \ref{lem:regularization})} about regularized subsolutions. In Section~\ref{s:proof},
the comparison principle (Theorem~\ref{th:comparison}) is proved. 

\paragraph{Acknowledgements.} The authors thank G. Barles and E. Chasseigne for enlighting discussions during the preparation of this note.
 This research was funded, in whole or in part, by l'Agence Nationale de la Recherche (ANR), project ANR-22-CE40-0010. For the purpose of open access, the author has applied a CC-BY public copyright licence to any Author Accepted Manuscript (AAM) version arising from this submission.

\section{Boundary lemmas}
\label{s:boundary}

\begin{defi}[(Limiting) semi-differentials]
For a set $A\subset \overline \Omega=[0,+\infty)$ and a point $x_0\in A$,  we define the (first order) super/subdifferential at $x_0$ of a function $u$ in $A$ as
\[
D^\pm_A u(x_0)=\left\{p\in \R,\quad \text{such that}\quad 0\le \pm\left\{u(x_0)+p(x-x_0)+o(x-x_0)-u(x)\right\}\quad \text{in}\quad A\right\}\
\]
and the limiting (first order) super/subdiffential at the boundary $x_0=0$ of $u$ in $\Omega$ as
\[
\bar D^\pm_\Omega u(0)=\left\{\text{$p\in \R$, there exists a sequence $p_n\in D^\pm_\Omega u(x_n)$ with $x_n\in \Omega$ and $(x_n,p_n)\to (0,p)$}\right\}.
\]
\end{defi}
\begin{rem}
Note that if $p\in  \bar D^+_\Omega u(0)$ and $u$ is a subsolution of $H(u_x)\le 0$ in $\Omega$, then $H(p)\le 0$.
\end{rem}
\begin{lem}[{Critical slopes and semi-differentials}]\label{lem:limit-diff}
Let $\Omega:=(0,+\infty)$ and  $u,v: \overline \Omega \to \R\cup \left\{-\infty,+\infty\right\}$ with $u$ upper semi-continuous and $v$ lower semicontinous such that $u(0)=0=v(0)$ and $u\le v$ in $\overline \Omega$.
The critical slopes defined by
\begin{equation}\label{eq::n12}
\overline p:=\limsup_{\Omega\ni x \to 0} \frac{u(x)}{x},\quad \underline p:=\liminf_{\Omega\ni x \to 0} \frac{v(x)}{x}
\end{equation}
satisfy the following (limiting) semi-differentials inclusions
\begin{equation}\label{eq::c1}
\R\cap \left[\underline p, \overline p\right]\subset \bar D^+_\Omega u(0) \cap \bar D^-_\Omega v(0)\quad \mbox{if}\quad  \overline p\ge \underline p
\end{equation}
\begin{equation}\label{eq::c2}
\R\cap \left[\overline p,\underline p\right] \subset  D^+_{\overline \Omega} u(0)\cap  D^-_{\overline \Omega} v(0) 
\quad \mbox{if}\quad \overline p \le \underline p
\end{equation}
{\begin{equation}\label{eq::c3}
\left\{\begin{array}{ll}
\overline p \in \bar D^+_\Omega u(0) &\quad \mbox{if}\quad \overline p \in \R \\
\underline p \in \bar D^-_\Omega v(0) &\quad \mbox{if}\quad \underline p \in \R \\
\end{array}\right.
\end{equation}}
\end{lem}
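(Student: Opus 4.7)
The three assertions concern approximating a slope $p$ by one-sided differentials; they share the common goal of finding, for each such $p$, either an interior point of $\Omega$ where $u(x)-px$ has a local maximum (for $p \in \bar D^+_\Omega u(0)$) or a one-sided test at $0$ (for $p \in D^+_{\overline\Omega} u(0)$). The statements about $\bar D^-_\Omega v(0)$ and $D^-_{\overline\Omega} v(0)$ follow by applying the arguments below to the pair $(-v,-u)$ in place of $(u,v)$, since $-v$ is upper semi-continuous, $-u \ge -v$, and the critical slopes transform as $\limsup(-v(x))/x = -\underline p$ and $\liminf(-u(x))/x = -\overline p$. I therefore focus on the statements for $u$.

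For \eqref{eq::c3}, the plan is to build an interior maximum of $u - \psi$ for a well-chosen quadratic test function $\psi$. Fix $\epsilon \in (0,1)$ and choose $r > 0$ so small that $u(x) \le (\overline p + \epsilon)x$ on $(0,r]$, which is possible since $\overline p = \limsup_{x \to 0^+} u(x)/x$ is finite. Pick $y_n \to 0^+$ in the sequence realizing this $\limsup$, so that $u(y_n) \ge (\overline p - \epsilon) y_n$. Setting $\psi(x) = (\overline p - \epsilon)x + \mu x^2$ with $\mu = 3\epsilon/r$, a direct computation gives $u(r) - \psi(r) \le -\epsilon r < 0$ while $u(y_n) - \psi(y_n) > 0$ for $y_n < r/6$. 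The upper semi-continuous function $u - \psi$ on $[0,r]$ then attains its positive maximum at some $x^\epsilon \in (0,r)$ strictly interior, and the viscosity test yields $\psi'(x^\epsilon) \in D^+_\Omega u(x^\epsilon)$ with $\psi'(x^\epsilon) \in [\overline p - \epsilon,\, \overline p + 5\epsilon]$. Diagonalizing $\epsilon \to 0$ with $r = r(\epsilon) \to 0$ then gives $\overline p \in \bar D^+_\Omega u(0)$.

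The heart of the proof is \eqref{eq::c1}, for which I propose a \emph{sandwich} argument. Assume $\overline p \ge \underline p$ and take $p \in (\underline p, \overline p) \cap \R$ strictly inside the interval. By the definitions of $\overline p$ and $\underline p$, there exist sequences $y_n, z_n \to 0^+$ with $u(y_n) > p y_n$ and $v(z_n) < p z_n$ for $n$ large. Using $u(0)=0$ and the standing hypothesis $u \le v$, the upper semi-continuous function $\phi(x) := u(x) - p x$ satisfies $\phi(0) = 0$, $\phi(z_n) \le v(z_n) - p z_n < 0$, while $\phi(y_m) > 0$ for some $y_m \in (0, z_n)$. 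Hence $\phi$ attains its strictly positive maximum on the compact $[0, z_n]$ at some $x_n^* \in (0, z_n)$ in the open interior; this is a local maximum of $\phi$ over $\Omega$, so $p \in D^+_\Omega u(x_n^*)$, and since $x_n^* \to 0$ we conclude $p \in \bar D^+_\Omega u(0)$. The endpoint $p = \overline p$ (when finite) is precisely \eqref{eq::c3}; the endpoint $p = \underline p$ (when $\underline p < \overline p$) is obtained by approximating $\underline p$ from the right by interior points $p_k = \underline p + 1/k$ and diagonalizing; if $\underline p = \overline p$, both endpoints are again delivered by \eqref{eq::c3}.

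Assertion \eqref{eq::c2} is the easiest: if $\overline p \le p$, then for every $\eta > 0$ one has $u(x) \le (\overline p + \eta)x \le p x + \eta x$ for $x \in \overline\Omega$ close enough to $0$, which is exactly $u(x) - u(0) \le p (x - 0) + o(x)$ and hence $p \in D^+_{\overline\Omega} u(0)$. The main obstacle of the whole proof is the sandwich step in \eqref{eq::c1}: ensuring that the maximum of $\phi$ is \emph{strictly} interior requires both $\phi(0) = 0$ and $\phi(z_n) < 0$, and the latter is where the joint hypothesis $u \le v$ enters decisively, coupling the two functions through the slope interval $(\underline p, \overline p)$.
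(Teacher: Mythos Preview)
Your proof is correct and follows essentially the same approach as the paper: the sandwich argument for \eqref{eq::c1} via an interior maximum of $u(x)-px$ on a shrinking interval is exactly the paper's argument, \eqref{eq::c2} is dismissed as immediate in both, and for \eqref{eq::c3} the paper simply cites an external reference while you supply a self-contained quadratic test-function construction. Two small remarks: in your \eqref{eq::c3} argument the stated inequality $u(y_n)\ge(\overline p-\epsilon)y_n$ only yields $u(y_n)-\psi(y_n)\ge -\mu y_n^2$, which is not positive; you need the strict $u(y_n)>(\overline p-\tfrac{\epsilon}{2})y_n$ (which the limsup sequence does provide for large $n$) to get $u(y_n)-\psi(y_n)>\tfrac{\epsilon}{2}y_n-\mu y_n^2>0$ for $y_n<r/6$. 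Also, the paper establishes a slightly sharper version of \eqref{eq::c1}, replacing $\underline p$ by $\underline q:=\liminf_{x\to 0}u(x)/x\le\underline p$, so that the inclusion $\R\cap[\underline q,\overline p]\subset \bar D^+_\Omega u(0)$ is obtained from $u$ alone without invoking $u\le v$.
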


\begin{proof}
We first notice that (\ref{eq::c2}) is a straightforward consequence of the  definition of semi-differentials.

We now focus on the proof of 
\begin{equation}\label{eq::n13}
\R\cap \left[\underline p, \overline p\right]\subset \bar D^+_\Omega u(0)  \quad \mbox{in case}\quad \overline p > \underline p
\end{equation}
and will even show the following better result
\begin{equation}\label{eq::n13bis}
\R\cap \left[\underline q, \overline p\right]\subset \bar D^+_\Omega u(0)  \quad \mbox{in case}\quad \overline p >  \underline q := \liminf_{\Omega\ni x\to 0} \frac{u(x)}{x}
\end{equation}
where the last inequality is assumed because $u\le v$ implies $\underline q\le \underline p$. This claim is a variant of  \cite[Eq.~(18)]{LS2}, and its
proof is a variant of   \cite[Lemma~15.3.1]{BC}. We give the details for the sake of completness.

Notice that  $p\in (\underline q,\overline p)$ means
$$\limsup_{\Omega\ni x\to 0} \frac{u(x)}{x}=\overline p>  p >  \underline q = \liminf_{\Omega\ni x\to 0} \frac{u(x)}{x}$$
We deduce that for any $\varepsilon>0$, there exists $y_\varepsilon \in (0,\varepsilon)$ and $z_\varepsilon \in (0,y_\varepsilon)$ such that
$$\frac{u(z_\varepsilon)}{x_\varepsilon}\  > p >   \frac{u(y_\varepsilon)}{y_\varepsilon}$$
Hence the function $\zeta(x):=u(x)-px$ satisfies 
$\zeta(0)=0> \zeta(y_\varepsilon)$. Let $M:=\sup_{[0,y_\varepsilon]} \zeta \ge \zeta(z_\varepsilon)>0$. 
Hence at a point $x_\varepsilon \in (0,y_\varepsilon)$ of maximum of $\zeta$, we see that $p\in D^+_\Omega u(x_\varepsilon)$. In the limit $\varepsilon\to 0$, we recover $p\in \bar D^+_\Omega u(0)$. Then  (\ref{eq::n13bis}) follows from the fact that $\bar{D}_\Omega^+ u(0)$ is closed.
Then (\ref{eq::n13}) holds true for $u$. {The similar inclusion for $v$ implies (\ref{eq::c1}) in the special case where $\overline p>\underline p$.
On the other hand, notice that  (\ref{eq::c3}) implies (\ref{eq::c1}) in the case $\overline p=\underline p$.
Hence it remains to show (\ref{eq::c3})}.

We now explain why the following fact holds true,
{\begin{equation}\label{eq::n14}
\underline p \in \bar D^-_\Omega v(0) \quad \mbox{if}\quad \underline p \in \R
\end{equation}
This result is a property of the critical slope for  any lower semi-continuous functions.
Its proof follows exactly the lines of the proof of  \cite[Lemma~2.9]{IM1}.
A similar result holds for $u$ and this  proves (\ref{eq::c3}).}
\end{proof}

{\begin{defi}[Coercive and semi-coercive functions]
Consider a function $G:\R \to \R$. Then $G$ is \emph{coercice} if \(\lim_{|p|\to +\infty} G(p)=+\infty\), and semi-coercive if \(\lim_{p\to -\infty} G(p)=+\infty\).
\end{defi}}

As a consequence of Lemma \ref{lem:limit-diff}, we have
\begin{cor}[Boundary viscosity inequalities]\label{cor:visco-ineq}
Let $\Omega$ and $u$, $v$ be as in statement of Lemma \ref{lem:limit-diff}.
For $\gamma=\alpha,\beta$, consider continuous functions $H_\gamma,F_\gamma: \R\to \R$ with $H_\alpha$  coercive {and $F_\alpha$ semi-coercive}. Assume that we have the following viscosity inequalities for some $\eta>0$
\begin{equation}\label{eq::n7}
\left\{\begin{array}{rlrl}
H_\alpha(u_x) \le 0 &\quad  \mbox{on} & \quad \Omega &\cap \left\{|u|< +\infty\right\}\\
\min\left\{F_\alpha,H_\alpha\right\}(u_x) \le 0 &\quad  \mbox{on} & \quad \left\{0\right\} &\cap \left\{|u|< +\infty\right\}\\
\\
H_\beta(v_x) \ge  \eta &\quad  \mbox{on} & \quad \Omega &\cap \left\{|v|< +\infty\right\}\\
\max\left\{F_\beta,H_\beta\right\}(v_x) \ge \eta &\quad  \mbox{on} & \quad \left\{0\right\} &\cap \left\{|v|< +\infty\right\}\\
\end{array}\right.
\end{equation}
For $\underline p,\overline p$ defined in (\ref{eq::n12}), we set $a:=\min(\underline p,\overline p)$ and $b:=\max(\underline p,\overline p)$. Then  {$\overline p\in [a,b]\cap \R$} and there exists a real number $p\in [a,b]$ such that
$$\mbox{either}\quad H_\alpha(p)\le 0 < \eta \le  (H_\beta-H_\alpha)(p)\quad
\mbox{or}\quad \max(F_\alpha,H_\alpha)(p)\le 0 <  \eta \le (F_\beta-F_\alpha)(p)$$
\end{cor}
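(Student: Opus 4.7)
The plan is to establish $\overline p \in \R$ and then find $p \in [a,b] \cap \R$ satisfying one of the two alternatives by splitting on the sign of $\underline p - \overline p$.

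I would first argue $\overline p \in \R$. To rule out $\overline p = +\infty$: upper semi-continuity of $u$ with $u(0) = 0$ yields $u \le 1$ on some $[0,\delta]$; pick $p > 1/\delta$ with $H_\alpha(p) > 0$ (possible by coercivity). Since $\overline p = +\infty$, there exist $x_n \to 0^+$ with $u(x_n)/x_n > p$. The function $\zeta(x) := u(x) - p x$ satisfies $\zeta(0) = 0$, $\zeta(\delta) \le 1 - p\delta < 0$, and $\zeta(x_n) > 0$, hence attains a positive maximum at some interior $x^* \in (0, \delta)$; then $p \in D_\Omega^+ u(x^*)$ and the interior subsolution inequality gives $H_\alpha(p) \le 0$, a contradiction. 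To rule out $\overline p = -\infty$: then $u(x)/x \to -\infty$ forces $D_{\overline\Omega}^+ u(0) = \R$, and the boundary subsolution inequality gives $\min(F_\alpha, H_\alpha)(p) \le 0$ for every $p \in \R$, which contradicts the coercivity of $H_\alpha$ together with the semi-coercivity of $F_\alpha$ as $p \to -\infty$. Hence $\overline p \in \R$, and $\overline p \in [a,b]$ is tautological.

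If $\underline p \le \overline p$, I take $p = \overline p$: by \eqref{eq::c1}, $\overline p \in \bar D_\Omega^+ u(0) \cap \bar D_\Omega^- v(0)$, and the interior sub/supersolution inequalities give $H_\alpha(\overline p) \le 0$ and $H_\beta(\overline p) \ge \eta$; since $\eta > 0$, the first alternative holds.

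If $\overline p < \underline p$, the whole $[\overline p, \underline p] \cap \R$ lies in $D_{\overline\Omega}^+ u(0) \cap D_{\overline\Omega}^- v(0)$ by \eqref{eq::c2}, so the boundary inequalities hold pointwise on this interval. If $\underline p \in \R$ and $H_\alpha(\underline p) \le 0$, \eqref{eq::c3} gives $\underline p \in \bar D_\Omega^- v(0)$ and hence $H_\beta(\underline p) \ge \eta$; the first alternative holds at $p = \underline p$. Otherwise, coercivity of $H_\alpha$ makes $S := \{p \in [\overline p, \underline p] : H_\alpha(p) > 0\}$ nonempty; let $p^* := \inf S$. By continuity, $p^* \in [\overline p, \underline p] \cap \R$ with $H_\alpha(p^*) = 0$, and there exist $p_n \downarrow p^*$ in $S$. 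At each $p_n$, the boundary subsolution inequality combined with $H_\alpha(p_n) > 0$ forces $F_\alpha(p_n) \le 0$; continuity of $F_\alpha$ yields $F_\alpha(p^*) \le 0$. The boundary supersolution inequality $\max(F_\beta, H_\beta)(p^*) \ge \eta$ then splits: if $H_\beta(p^*) \ge \eta$, the first alternative holds at $p = p^*$ via $H_\alpha(p^*) = 0$; if $F_\beta(p^*) \ge \eta$, the second holds at $p = p^*$ via $\max(F_\alpha, H_\alpha)(p^*) \le 0$ and $(F_\beta - F_\alpha)(p^*) \ge \eta - F_\alpha(p^*) \ge \eta$.

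The main obstacle is precisely this last subcase: one cannot simply evaluate the viscosity inequalities at a single endpoint, because the available sub- and supersolution information attached to $\overline p$ and $\underline p$ refer to different Hamiltonians. Locating the intermediate point $p^*$ where $H_\alpha$ vanishes, and then using continuity of $F_\alpha$ to promote the bound $F_\alpha \le 0$ (enforced on the sequence $(p_n)$ as a by-product of $H_\alpha > 0$ there) to $p^*$ itself, is the key coupling mechanism that makes the two alternatives exhaustive.
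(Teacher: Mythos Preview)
Your argument is correct. The finiteness of $\overline p$ and the case $\underline p\le \overline p$ match the paper's proof (the paper cites external lemmas for $\overline p\in\R$, while you give self-contained arguments, but the content is the same). The real difference lies in the case $\overline p<\underline p$.

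The paper handles that case by an $\varepsilon$-perturbed contradiction: it assumes that for some small $\varepsilon>0$ one has, at every $p\in[a,b]\cap\R$, either $H_\beta-H_\alpha<\eta-\varepsilon$ or $H_\alpha>\varepsilon$, and simultaneously either $F_\beta-F_\alpha<\eta$ or $\max(F_\alpha,H_\alpha)>\varepsilon$. Using the endpoint information $H_\alpha(a)\le 0$, $H_\beta(b)\ge\eta$ (from \eqref{eq::c3}) and the boundary inequalities on the whole interval (from \eqref{eq::c2}), it finds a point where $H_\alpha=\varepsilon$ and derives a contradiction; then it lets $\varepsilon\to 0$ and uses coercivity of $H_\alpha$ for compactness of the resulting sequence $p_\varepsilon$. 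Your construction is more direct: you locate once and for all the crossing point $p^*=\inf\{p\in[\overline p,\underline p]:H_\alpha(p)>0\}$, read off $H_\alpha(p^*)=0$ and $F_\alpha(p^*)\le 0$ by approaching from the right inside $S$, and conclude via the boundary supersolution inequality at $p^*$. This avoids the $\varepsilon$-limit entirely. One small point you leave implicit: the inequality $H_\alpha(p^*)\le 0$ uses $H_\alpha(\overline p)\le 0$ (from $\overline p\in\bar D^+_\Omega u(0)$, \eqref{eq::c3}) when $p^*=\overline p$; otherwise it follows from $[\overline p,p^*)\cap S=\emptyset$ and continuity. The paper's $\varepsilon$-scheme is arguably more portable to settings where a clean crossing point is harder to isolate, but in the present one-dimensional situation your argument is shorter and equally rigorous.
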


\begin{rem}\label{rem::b30}
Corollary \ref{cor:visco-ineq} can very easily be extended to the case of junctions where the Hamiltonians $H_\alpha$'s are coercive on each branch
{and the junction function is semi-coercive in the sense of \cite[Eq.~(2.2)]{IM1}}.
\end{rem}

\begin{rem}\label{rem::n11}
Under the assumptions of Corollary \ref{cor:visco-ineq}, we can show  that the subsolution $u$  is  globally Lipschitz continuous in $\overline \Omega$.
\end{rem}

\begin{proof}[Proof of Corollary \ref{cor:visco-ineq}]

{We  sketch  the proof that $\overline p\in\R$.}
Because $H_\alpha$ is coercive and $F_\alpha$ is semi-coercive, we know from \cite[Lemma~3.8]{FIM} that $u$ is weakly continuous at $x=0$, i.e.
\begin{equation}\label{eq::c4}
0=u(0)=\limsup_{\Omega\ni x\to 0^+} u(x).
\end{equation}
Then the proof  \cite[Lemma~ 2.10]{IM1} shows additionally that $\overline p > -\infty$. Now we  claim that we also have $\overline p< +\infty$. Indeed,   it can be seen by contradiction, leaving fall down above the graph of $u$ on $[0,y]$, some straight lines of  slopes $s=\frac{u(y)}{y}$ for large positive $s$ and using the equation satisfied by $u$. We conclude that $\overline p\in \R\cap [a,b]$.

Assume first that $\underline p \le \overline p$.
Here (\ref{eq::c1}) shows that $H_\alpha(p)\le 0 < \eta\le H_\beta(p)$ and then
$$\eta \le (H_\beta-H_\alpha)(p)\quad \mbox{for all}\quad p\in \left[\underline p, \overline p\right]\cap \R$$
which implies in particular the desired conclusion.

If  $\underline p > \overline p$, then we have  $[a,b]\subset (-\infty,+\infty]$ with $a<b$ and
\begin{equation}\label{eq::n9}
\left\{\begin{array}{rllll}
H_\alpha(a) \le & 0 &&&\quad \mbox{because}\quad a\in \R\\
&0 &< \eta &\le H_\beta(b)&\quad \mbox{if}\quad b\in \R\\
\min\left\{H_\alpha,F_\alpha\right\} \le &0 &< \eta &\le \max\left\{H_\beta,F_\beta\right\} &\quad \mbox{on}\quad   \left[a,b\right]\cap \R
\end{array}\right.
\end{equation}
where the last line follows {from (\ref{eq::c2}), and the first two lines follow from (\ref{eq::c3}).\\
\noindent \textsc{Intermediate claim.} Now we} assume by contradiction that there exists $\varepsilon>0$ (small enough) such that
\begin{equation}\label{eq::n10}
\left\{\begin{array}{ll}
\mbox{\bf i)}&H_\beta-H_\alpha < \eta-\varepsilon\quad \mbox{or}\quad \varepsilon< H_\alpha\\
\mbox{and}&\\
\mbox{\bf ii)}&F_\beta-F_\alpha < \eta \quad \mbox{or}\quad \varepsilon< \max(F_\alpha,H_\alpha)
\end{array}\right|\quad \mbox{for all}\quad p\in [a,b]\cap \R
\end{equation}
Recall that the coercivity of $H_\alpha$ means $H_\alpha(\pm \infty):=\liminf_{p\to \pm \infty} H_\alpha(p)=+\infty$.\\
\noindent {\bf Case 1: $H_\alpha(b)>\varepsilon$}\\
Here $b$ can be finite or equal to $+\infty$. We get 
$$H_\alpha(b)> \varepsilon> 0\ge H_\alpha(a)$$
Therefore by continuity, there exists $p\in (a,b)$ such that $H_\alpha(p)=\varepsilon$.
Hence in the last line of (\ref{eq::n9}), the first inequality implies that $F_\alpha(p)\le 0$.
Because  (\ref{eq::n10}) i) and ii) hold true for $p$, we get 
$$H_\beta(p)<\eta\quad \mbox{and}\quad F_\beta(p)<\eta$$
which leads to a contradiction in the last line of (\ref{eq::n9}), the second inequality.\\
\noindent {\bf Case 2: $H_\alpha(b)\le \varepsilon$}\\
Then $b\in \R$ and (\ref{eq::n10}) i) implies for $p=b$ that $H_\beta(b)< \eta$, which is in contradiction with the second line of (\ref{eq::n9}).\\
{\noindent \textsc{Conclusion.} We just proved that  (\ref{eq::n10}) does not hold true.} This implies that for all $\varepsilon>0$ small enough, there exists some $p_\varepsilon\in [a,b]\cap \R$ such that we have at $p_\varepsilon$
$$\mbox{\bf i)}\quad H_\alpha\le \varepsilon < \eta-\varepsilon \le H_\beta-H_\alpha\quad \mbox{or}\quad \mbox{\bf ii)}\quad \max(F_\alpha,H_\alpha)\le \varepsilon < \eta \le F_\beta-F_\alpha$$
Because $H_\alpha$ is coercive, we see in both cases i) or ii), that we can always extract a subsequence as $\varepsilon\to 0$ such that 
$p_\varepsilon \to p\in [a,b]\cap \R$. As a consequence, we get that $p$ satisfies i) or ii) for $\varepsilon=0$, {which is the desired conclusion}.
\end{proof}

\section{Barriers and tangential regularization}
\label{s:barriers}

In the proof of the comparison principle, two standard results about the construction of barriers and regularization of subsolutions by sup-convolution are needed.
\begin{lem}[Barriers] \label{lem:barriers}
Assume \eqref{assum:H} and \eqref{assum:F} and that the initial data $u_0$ is bounded and Lipschitz continuous. 
Assume that $u$ (resp. $v$) is a bounded upper semi-continuous subsolution (resp. a  bounded lower semi-continuous supersolution) of \eqref{eq:hj}-\eqref{eq:bc-w}.
Then there exists some constant $\lambda>0$  such that the functions
\[u^\pm(t,x)=u_0(x)\pm \lambda t\]
satisfy the following barrier properties:
\begin{itemize}
  \item
if $u\le u_0$ in $\left\{0\right\}\times \overline \Omega$, then $u\le u^+$ in $[0,T)\times \overline \Omega$,
\item
if $v\ge u_0$ in $\left\{0\right\}\times \overline \Omega$, then $v  \ge  u^-$ in $[0,T)\times \overline \Omega$.
\end{itemize}
\end{lem}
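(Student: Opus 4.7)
The plan is to show $u \le u^+$ under the hypothesis $u(0,\cdot) \le u_0$; the bound $v \ge u^-$ follows by a symmetric argument. I would choose $\lambda > 0$ large enough that $u^+$ is a strict viscosity supersolution of the problem, and then compare $u$ against $u^+$ by the classical doubling-of-variables method.

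For the first step, let $L := \mathrm{Lip}(u_0)$. The bound $|H(X,0)|\le C$ together with the $p$-Lipschitzianity from \eqref{assum:H} gives $H(X,p)\ge -C(1+|p|)$, and likewise for $F$ from \eqref{assum:F}. If a smooth test function $\phi$ touches $u^+$ from below at some $(t_0, x_0) \in (0,T) \times \overline\Omega$, then $\phi_t(t_0, x_0) = \lambda$, while $\phi_x(t_0, x_0)$ lies in the spatial subdifferential of $u_0$ at $x_0$ and therefore has modulus at most $L$. Taking $\lambda := C(1+L) + 1$ yields
\[\phi_t + H(X_0, \phi_x) \ge 1 \quad \text{in $(0,T) \times \Omega$,} \qquad \phi_t + \max\{F, H\}(X_0, \phi_x) \ge 1 \quad \text{on $(0, T) \times \partial \Omega$,}\]
i.e. $u^+$ satisfies \eqref{eq:hj}-\eqref{eq:bc-w} with a strict margin of $+1$.

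For the second step, I would argue by contradiction using the doubling-variable functional
\[
M_{\varepsilon, \eta, \gamma} := \sup_{([0,T)\times \overline \Omega)^2} \Big\{u(t,x) - u^+(s,y) - \frac{(x-y)^2 + (t-s)^2}{2\varepsilon} - \frac{\eta}{T-t} - \gamma(x+y)\Big\},
\]
where $\gamma > 0$ ensures the supremum is attained (since $u, u^+$ are bounded on $[0,T)\times \overline\Omega$) and $\eta > 0$ prevents any maximizer from escaping to $t = T$. At a maximizer $(\bar t, \bar x, \bar s, \bar y)$, the global Lipschitz regularity of $u^+$ controls $(\bar t - \bar s)/\varepsilon$ and $(\bar x - \bar y)/\varepsilon$ uniformly in $\varepsilon$, so $|\bar t - \bar s|, |\bar x - \bar y| = O(\varepsilon)$. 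Writing the viscosity inequalities for $u$ and $u^+$ at the maximizer and subtracting, the time-coupling error from the third lines of \eqref{assum:H}-\eqref{assum:F} takes the form $C|\bar t - \bar s|(1 + H)$ with $H$ bounded (the gradients being bounded), hence vanishes as $\varepsilon \to 0$; the spatial variation of $H$ and $F$ vanishes similarly by continuity on compact $p$-sets. The strict margin $+1$ from the first step then produces the contradiction. Finally, sending $\gamma, \eta \to 0$ and using $u(0,\cdot) \le u_0 = u^+(0, \cdot)$ yields $u \le u^+$ on $[0,T)\times \overline\Omega$.

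The principal obstacle is the boundary case of the doubling argument, when $\bar x = 0$ or $\bar y = 0$. One must then enumerate the subcases according to which of $F$ or $H$ is active in \eqref{eq:bc-w} on either side, and reproduce the above estimate in each. What makes all these cases tractable, in contrast to the setting of the main theorem, is precisely that the test object $u^+$ is globally Lipschitz in $(t,x)$: the bounded gradients prevent the strong time-coupling in \eqref{assum:H}-\eqref{assum:F} from causing difficulty, so the barrier lemma is classical in spirit.
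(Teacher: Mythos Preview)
Your approach is correct in outline, but it is considerably heavier than what the paper actually does. The paper observes that when $u_0\in C^1$, the function $u^+(t,x)=u_0(x)+\lambda t$ is itself $C^1$, so no doubling is needed at all: one simply penalizes $u-u^+$ by $\eta/(T-t)+\alpha x^2/2$, finds a maximum point $(\bar t,\bar x)$, and uses the smooth $u^+$ (plus penalization) directly as a test function for the subsolution $u$. The resulting viscosity inequality contradicts the choice of $\lambda$ in one line. The general Lipschitz case is then obtained by approximating $u_0$ uniformly by $C^1$ functions with the same Lipschitz constant. Your route trades this approximation step for a full doubling-of-variables argument, which works but is less direct.

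One point in your boundary discussion deserves sharpening. You attribute the tractability of the boundary subcases to the bounded gradients ``preventing the strong time-coupling from causing difficulty''. That is only half the story. The genuine obstruction in the mixed subcases (e.g.\ $\bar x=0$, $\bar y>0$, with $F$ active on the subsolution side and only $H$ available on the supersolution side) is the $F$-versus-$H$ mismatch, not the time-coupling. What actually resolves it is that your choice $\lambda=C(1+L)+1$ makes $u^+$ a strict classical supersolution of \emph{both} equations $\lambda+H(X,p)\ge 1$ and $\lambda+F(X,p)\ge 1$ for every $X$ and every $|p|\le L$, not merely of their $\max$. Hence whichever of $F$ or $H$ is active for $u$ at $(\bar t,\bar x)$, you can match it against the \emph{same} function evaluated at $(\bar s,\bar y)$ and conclude. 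If you keep the doubling route, you should make this explicit; it is the real reason the barrier comparison is elementary, and it is exactly what fails for the general supersolution $v$ in Theorem~\ref{th:comparison}.
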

The previous lemma is a direct consequence of the definition of viscosity solutions if $u_0$ is $C^1$. In the general case, it follows by a standard approximation procedure.

We now turn to the regularization of subsolutions with respect to tangential variables. Even if the proof is very standard, {we will give below a short sketch of it}.
\begin{lem}[Tangential regularization of subsolutions by sup-convolution] \label{lem:regularization}
Assume that $H$ satisfies (\ref{assum:H}). Let $u:[0,T)\times \overline \Omega \to \R$ be an upper semi-continuous subsolution of (\ref{eq:hj})
which satisfies
\[|u-u_0|\le C_T\quad \text{in}\quad [0,T)\times \overline \Omega.\]
We extend $u$ to $t=T$ by $u(T,x):= \limsup \{ u(s,y) : (s,y) \to (T,x), (s,y) \in [0,T)\times \overline \Omega \}$ and to $\R\times \overline \Omega$ by,
\[
  \begin{cases}
    u(t,x)=u(T,x) &\quad \text{for}\quad t\ge T,\\
    u(t,x)=u(0,x)&\quad \text{for}\quad t\le 0.
  \end{cases}
\]
Then for $\nu>0$, we consider the tangential sup-convolution
\[u^\nu(s,x):=\sup_{t\in \R} \left\{u(t,x)-\frac{|t-s|^2}{2\nu}\right\}= u(\bar t,x)-\frac{|\bar t-s|^2}{2\nu}\]
{and any such $\bar t$ (depending on $(s,x)\in \R\times \overline \Omega$) satisfies $\bar t\in [s-\theta^\nu,s+\theta^\nu]$}    with $\theta^\nu:=2\sqrt{\nu C_T}< T/2$.
\medskip

If $I^\nu$ denotes the time interval $(\theta^\nu,T-\theta^\nu)$, then
the function $u^\nu$ is Lipschitz continuous with respect to $t$ {in} $\R \times \bar \Omega$ and with respect to $x$ {in} $I^\nu \times \Omega$,
  \[ |\partial_s u^\nu|_{L^\infty(\R\times \overline \Omega)} \le \frac{\theta^\nu}{\nu} \quad \text{ and } \quad |\partial_x u^\nu|_{L^\infty(I^\nu\times \Omega)} \le L^\nu \]
  with $L^\nu:=\sup\left\{p\in \R,\quad \min_{X \in [0,T]\times \overline \Omega} H(X,p) \le \frac{\theta^\nu}{\nu} \right\}$.\\

  {Assume moreover that that $u$ is a subsolution at the boundary $(0,T)\times \partial\Omega$, i.e. satisfies the first line of (\ref{eq:bc-w}), for some $F$ that satisfies (\ref{assum:F}).
    Then $u^\nu$ is Lipschitz continuous in space and time on $I^\nu\times \overline \Omega$ of Lipschitz constant $L_\nu:=\max(\frac{\theta^\nu}{\nu},L^\nu)$.}
\end{lem}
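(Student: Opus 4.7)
The plan is to proceed in four steps: (i) bound the maximizer $\bar t$; (ii) deduce Lipschitz regularity in $s$; (iii) transfer the subsolution property from $u$ to $u^\nu$ by a doubling-of-variables argument and conclude Lipschitz regularity in $x$ in the interior; (iv) extend the regularity up to the boundary $\{x=0\}$ using the boundary subsolution condition involving $F$.

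For (i), the bound $|u-u_0|\le C_T$ propagates to the extended $u$ on $\R\times\overline\Omega$, and taking $t=s$ as a competitor in the sup-convolution yields
\[
\frac{|\bar t-s|^2}{2\nu}\le u(\bar t,x)-u(s,x)\le 2C_T,
\]
so $|\bar t-s|\le 2\sqrt{\nu C_T}=\theta^\nu$. Since $\theta^\nu<T/2$, for any $s\in I^\nu$ the maximizer $\bar t$ lies in $(0,T)$, where the original subsolution property of $u$ is available. For (ii), I use the classical semi-convexity of sup-convolutions---explicitly, $s\mapsto u^\nu(s,x)+\frac{s^2}{2\nu}$ is a supremum of functions affine in $s$, hence convex---combined with the a.e.\ identity $\partial_s u^\nu=(\bar t-s)/\nu$, which together yield the desired bound $|\partial_s u^\nu|\le \theta^\nu/\nu$ on all of $\R\times\overline\Omega$.

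For (iii), I show that $u^\nu$ inherits a (perturbed) viscosity subsolution property. If $u^\nu-\varphi$ attains a local maximum at $(s_0,x_0)\in I^\nu\times\Omega$ for a smooth test $\varphi$, then the tripled function $u(t,x)-\varphi(s,x)-\tfrac{(t-s)^2}{2\nu}$ attains a maximum at some $(s_0,x_0,\bar t_0)$ with $\bar t_0\in[s_0-\theta^\nu,s_0+\theta^\nu]\subset(0,T)$. First-order optimality in $s$ yields $\varphi_s(s_0,x_0)=(\bar t_0-s_0)/\nu$, and the interior subsolution inequality for $u$ at $(\bar t_0,x_0)$ (with the quadratic penalty playing the role of the $t$-test function) reads
\[
\varphi_s(s_0,x_0)+H\bigl(\bar t_0,x_0,\varphi_x(s_0,x_0)\bigr)\le 0.
\]
Applied to any superdifferential element $(p_s,p_x)$ of $u^\nu$ at $(s_0,x_0)$ and using $|p_s|\le \theta^\nu/\nu$, this gives $H(\bar t_0,x_0,p_x)\le \theta^\nu/\nu$. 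Uniform coercivity of $H$ then forces $|p_x|\le L^\nu$, and boundedness of all $x$-superdifferentials of the usc function $u^\nu$ on the open set $I^\nu\times\Omega$ implies the claimed Lipschitz bound in $x$.

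For (iv), the same doubling argument applied at $x_0=0$ invokes the boundary subsolution inequality $u_t+\min\{F,H\}(X,u_x)\le 0$, yielding either $F(\bar t_0,0,p_x)\le \theta^\nu/\nu$---which, by semi-coercivity of $F$, bounds $p_x$ from below---or $H(\bar t_0,0,p_x)\le \theta^\nu/\nu$---which, by coercivity of $H$, bounds $|p_x|$. Combined with the interior bound from (iii), this produces the two-sided estimate $|p_x|\le L_\nu$ on the superdifferentials of $u^\nu$ over $I^\nu\times\overline\Omega$. I expect this last step to be the main technical subtlety: one must first ensure that $u^\nu$ is actually continuous up to $\{x=0\}$ so that the Lipschitz estimate on the open interior extends to $I^\nu\times\overline\Omega$. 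This follows from the weak continuity at the boundary enjoyed by subsolutions of coercive/semi-coercive problems (\cite[Lemma 3.8]{FIM}), transmitted from $u$ to $u^\nu$ through the sup-convolution.
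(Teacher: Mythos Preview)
Your proposal is correct and follows essentially the same approach as the paper's sketch: bound on $\bar t$ from $|u-u_0|\le C_T$, time-Lipschitz from the sup-convolution structure, interior space-Lipschitz from the inherited viscosity inequality together with coercivity of $H$, and extension to $\overline\Omega$ via weak continuity at the boundary (\cite[Lemma~3.8]{FIM}). The only remark is that the first half of your step~(iv)---bounding boundary superdifferentials directly via $\min\{F,H\}$---is an unnecessary detour (at $x_0=0$ the $x$-superdifferential is a right half-line, so an upper bound on $p_x$ there is vacuous); the argument you give in the second half, namely interior Lipschitz plus weak continuity up to $\{x=0\}$, is exactly what the paper uses and is all that is needed.
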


\begin{proof}[Sketch of the proof]
It is easy to check that $u-u^\nu\le 2C_T$ which gives the bound on $\theta^\nu\ge|\bar t-s|$. Moreover the time derivative  of $u^\nu$ is like $\frac{\bar t-s}{\nu}$ which gives the bound on $\partial_s u^\nu$. The PDE inequality satisfied by $u^\nu$ gives naturally the bound on $\partial_x u^\nu$. Finally, when $F$ satisfies (\ref{assum:F}), we see using \cite[Lemma~3.8]{FIM}) that $u$ (and then $u^\nu$) is weakly continuous on $I^\nu\times \partial \Omega$, which implies the Lipschitz continuity of $u^\nu$ in $I^\nu\times \overline \Omega$.
This ends the sketch of the proof.
\end{proof}

\section{Proof of the comparison principle}
\label{s:proof}

Before proving our comparison principle, we describe the main steps. 

We first use the  doubling variable technique with respect to time with a parameter $\nu>0$.
This procedure can be interpreted as a sup-convolution in time of the subsolution $u$ only (\textbf{Step 1}).

We then focus on the case where the supremum of 
\[u(t,x)-v(s,x)-\text{correction/penalization} \]
is reached at some $(\bar t,\bar s,\bar x)$ with $\bar x$ on the boundary of $\Omega$. (\textbf{Step 2}). 

At this stage, the sup-convolution $u^\nu$ of the subsolution $u$ is  $L_\nu$- Lipschitz continuous in space and time (thanks to the coercivity of the Hamiltonian),
but the supersolution is only lower semi-continuous (\textbf{Step 3}).

We then consider twin blow-ups (\textbf{Step 4}): one at $(\bar t,\bar x)$ for $u$, and one at $(\bar s,\bar x)$ for $v$ (up to some correction terms on $v$).
After blow-up, we get half-relaxed limits $U^0$, $V^0$ that are globally defined on $\R_t\times \overline \Omega$ and  satisfy $U^0(0,0)=0=V^0(0,0)$. We show the following key {Lipschitz} inequality,
\begin{equation}\label{eq::b31}
U^0(t,x)-V^0(s,y)\le L_\nu |x-y|+b(t-s)\quad \text{with}\quad b:=\frac{\bar t-\bar s}{\nu}
\end{equation}
where the Lipschitz constant $L_\nu$ is inherited from $u^\nu$.

Then, considering  $\overline u$, the supremum in time of the map $t\mapsto U^0(t,x)-bt$ and $\underline v$, the infimum in time  of the map $s\mapsto  V^0(s,y)-bs$ (\textbf{Step 5}), we  see that $\overline u^*$ and $\underline v_*$ are  subsolution and supersolution of a 1D problem with moreover the key {Lipschitz}  inequality\footnote{Notice that without the $L_\nu$-Lipschitz inequality (\ref{eq::b31}), we would only get $\overline u \le \underline v$ and $\overline u(0)=0=\underline v(0)$, which is not sufficient to conclude.}
\[\overline u^*(x) - \underline v_*(y)\le L_\nu|x-y|\quad \text{with}\quad \overline u^*(0)=0=\underline v_*(0).\]
This procedure reduces the study to a 1D problem that is solved using the boundary viscosity inequalities from Corollary \ref{cor:visco-ineq} (\textbf{Step 6}).
\bigskip

\begin{proof}[Proof of Theorem~\ref{th:comparison}]
The proof is split into several steps. The first two steps are standard and new ideas appear in the next steps.
\medskip

\noindent {\bf Step 1: approximate supremum.}\\
Let $\eta >0$. This parameter {will be small enough but} will not vary until we {prove} that the following quantity is non-positive,
\[ M = \sup_{t \in [0,T), x \in \overline \Omega} \left\{ u(t,x) - v(t,x) - \frac{\eta}{T-t} \right\}. \]
It turns out that 
\begin{equation}
  \label{eq:M}
M = \lim_{\nu\to 0} \left\{\lim_{\alpha\to 0} M_{\nu,\alpha}\right\}.
\end{equation}
with
\[M_{\nu,\alpha}:=\sup_{t,s\in [0,T),\ x\in \overline \Omega} \Psi_{\nu,\alpha}(t,s,x){=\Psi_{\nu,\alpha}(\bar t,\bar s,\bar x)}\]
and {(with a careful choice of the penalization term $\frac{\eta}{T-s}$ instead of $\frac{\eta}{T-t}$)}
\[\Psi_{\nu,\alpha}(t,s,x):=u(t,x)-v(s,x)-\frac{\eta}{T-s}-\alpha g(x) - \frac{|t-s|^2}{2\nu}\quad \text{with}\quad g(x):=\frac{x^2}{2}.\]
{Moreover all} maximisers $(\bar t,\bar s,\bar x)$ in the definition of $M_{\nu,\alpha}$  satisfy 
\begin{equation}\label{eq::n35}
\lim_{\nu\to 0} \left\{\lim_{\alpha\to 0} \frac{|\bar t-\bar s|^2}{\nu}\right\}=0, \quad \lim_{\nu\to 0} \left\{\lim_{\alpha\to 0} \alpha g(\bar x)\right\}=0,\quad  \limsup_{\nu\to 0} \left\{\limsup_{\alpha\to 0}  \frac{\eta}{T-\bar s}\right\}\le 2C_T.
\end{equation}

\noindent {\textbf{ Step 2: Reduction to the case where the supremum is reached at the boundary.}}\\
Using the doubling variable technique with respect to $x$ for $u$ and $v$ and considering $u(t,x)$, $v(s,y)$ with a further penalization term of the form $\frac{|x-y|^2}{2\delta}$,
we can rely on barrier estimates close to $t=0$ to get estimates on maximum points $(\bar t_\delta,\bar x_\delta,\bar s_\delta,\bar y_\delta)\to (\bar t,\bar x,\bar s,\bar x)$ as $\delta\to 0$ {(for some subsequence $\delta$ and some suitable limit $(\bar t,\bar s,\bar x)$).}
We deduce that in the limit, the following fact holds true,
\begin{equation}\label{eq::n25}
\bar t,\bar s \in [\tau_\eta,T-\tau_\eta],\quad \bar x\in [0,\rho_\alpha],\quad \left(\frac{\bar t -\bar s}{\nu}, \bar p\right)\in \bar D^{1,+}_{t,x}  u(\bar t,\bar x),\quad  |\bar p|\le L_\nu
\end{equation}
where $L_\nu$ is the Lipschitz constant of $u^\nu$ {in}  Lemma~\ref{lem:regularization}.
Moreover, it is possible  to choose $\tau_\eta$  depending on $\eta$ only and $\rho_\alpha$  depending on $\alpha$ only.\footnote{Up to increase $\lambda$ and $C_T$ in the barrier Lemma \ref{lem:barriers},  and to decrease $\eta$ in the time penalisation term, we can assume that $\lambda T=C_T$ and it is possible to show in this case that we can choose $\tau_\eta:=\frac{\eta}{4C_T}$ and $\rho_\alpha=\sqrt{\frac{6C_T}{\alpha}}$.}

If $\bar x>0$, then we are in the classical case where we can conclude by writing viscosity inequalities for the sequence $(\bar t_\delta, \bar x_\delta, \bar s_\delta, \bar y_\delta)$  and
by combining them in the classical way. {Classical details for this step are given in the companion paper.} We are thus reduced to deal with the case where $\bar x=0$.

\medskip

\noindent {\textbf{Step 3: the  key Lipschitz estimate.}}\\
{Following Lemma~\ref{lem:regularization}, we extend $u$ and consider}
\[
  \begin{cases}
    \displaystyle U^\nu(s,x)=\sup_{t\in \R} \left\{u(t,x)-\frac{|t-s|^2}{2\nu}\right\}\\
    \displaystyle V(s,x)=v(s,x)+\frac{\eta}{T-s}+\alpha g(x)
  \end{cases}
\]
{and  there exists some (possibly non unique)} $\bar t_s \in [s-\theta^\nu,s+\theta^\nu]$ such that  $U^\nu (s,x)=u(\bar t_s,x)-\frac{|\bar t_s-s|^2}{2\nu}$.
If $s\in (\theta^\nu,T-\theta^\nu)$, then we see that $\bar t_s\in (0,T)$ and then we also have
\[U^\nu(s,x):=\sup_{t\in [0,T)} \left\{u(t,x)-\frac{|t-s|^2}{2\nu}\right\}.\]
  {In particular for $(s,x)=(\bar s,\bar x)$, we can choose $\bar t_{\bar s} = \bar t$ where $\bar t_{\bar s}$ is given by Lemma~\ref{lem:regularization} and $(\bar t,\bar s,\bar x)$ appear in Steps 1 and 2.}
Now we choose  $\nu>0$ small enough such that $\theta^\nu< \tau_\eta$ and we set $I^\nu:=(\theta^\nu,T-\theta^\nu).$
Moreover we have
$$u(t,x)-\frac{|t-s|^2}{2\nu} -V(s,x)=\Psi_{\nu,\alpha}(t,s,x)\le \Psi_{\nu,\alpha}(\bar t,\bar s,\bar x)=U^\nu(\bar s,\bar x)-V(\bar s,\bar x)$$
and then taking the supremum in $t\in\R$, we get for all $s\in I^\nu$, $x,y\in \overline \Omega$
{\begin{equation}\label{eq::n26}
\begin{cases}
U^\nu(s,y)-V(s,y) \le U^\nu(\bar s,\bar x)-V(\bar s,\bar x)\\
U^\nu(s,x)-U^\nu(s,y) \le L_\nu|x-y| 
\end{cases}
\end{equation}}
where the second line follows from the fact that $U^\nu$ is $L_\nu$-Lipschitz continuous (see  Lemma \ref{lem:regularization}).
Notice also that we have
\[
\left\{\begin{array}{rll}
\partial_t u + H(t,x,\partial_x  u) \le 0 &\quad \text{in}&\quad (0,T)\times \Omega\\
\partial_t u + \min(F,H)(t,x,\partial_x  u) \le 0 &\quad \text{in}&\quad (0,T)\times \partial\Omega\\[2ex]
-\frac{\eta}{(T-s)^2}+\partial_s V + H(s,x,\partial_x V-\alpha \partial_x g) \ge 0 &\quad \text{in}&\quad (0,T)\times \Omega\\
-\frac{\eta}{(T-s)^2}+\partial_s V + \max(F,H)(s,x,\partial_x V -\alpha \partial_x g) \ge 0 &\quad \text{in}&\quad (0,T)\times \partial\Omega.
\end{array}\right.
\]
\medskip

\noindent {\bf Step 4: the  twin blow-ups.}\\
We then consider the following  twin blow-ups with small parameter $\varepsilon>0$: one blow-up for  $u$ at the point $(\bar t,\bar x)$ and one blow-up for  $V$ at the point $(\bar s,\bar x)$,
\begin{equation}\label{eq::n27}
\begin{cases}
U^{\varepsilon}(\tau,\xi):=\varepsilon^{-1} \left\{  { u(\bar t+\varepsilon \tau, \bar x + \varepsilon \xi)- u (\bar t,\bar x)}\right\},&\quad U^{\varepsilon}(0,0)=0,\\
V^{\varepsilon}(\sigma,\xi):=\varepsilon^{-1} \left\{V(\bar s+\varepsilon \sigma, \bar x + \varepsilon \xi)-V(\bar s,\bar x)\right\},&\quad V^{\varepsilon}(0,0)=0.
\end{cases}
\end{equation}
Before passing to the limit $\eps \to 0,$ they satisfy
\begin{equation}\label{lequation}
\left\{\begin{array}{rll}
\partial_\tau U^\varepsilon + H(\bar t+\varepsilon \tau,\bar x+\varepsilon \xi,\partial_\xi U^\varepsilon) \le 0 &\quad \text{in}&\quad I^\varepsilon_{\bar t}\times \Omega\\
\partial_\tau U^\varepsilon + \min(F,H)(\bar t+\varepsilon \tau,\bar x+\varepsilon \xi,\partial_\xi U^\varepsilon) \le 0 &\quad \text{in}&\quad I^\varepsilon_{\bar t}\times \partial \Omega\\
\\
-\bar \eta^\varepsilon+\partial_\sigma V^\varepsilon + H(\bar s+ \varepsilon \sigma,\bar x+ \varepsilon \xi,\partial_\xi V^\varepsilon-\alpha \partial_x g(\bar x+\varepsilon \xi)) \ge 0 &\quad \text{in}&\quad I_{\bar s}^\varepsilon\times \Omega\\
-\bar \eta^\varepsilon+\partial_\sigma V^\varepsilon + \max(F,H)(\bar s+ \varepsilon \sigma,\bar x+ \varepsilon \xi,\partial_\xi V^\varepsilon -\alpha \partial_x g(\bar x+\varepsilon \xi)) \ge 0 &\quad \text{in}&\quad I_{\bar s}^\varepsilon\times \partial\Omega\\
\end{array}\right.
\end{equation}
with
\[
  \bar \eta^\varepsilon (\sigma):=\frac{\eta}{(T-(\bar s+ \varepsilon \sigma))^2}\quad \text{and}\quad {I_{\bar r}^\varepsilon:= \left(-\frac{\bar r}{\varepsilon},\frac{T-\bar r}{\varepsilon}\right)\quad \mbox{for}\quad \bar r=\bar t,\bar s.}
\]

Because $U^\nu$ is globally $L_\nu$-Lipschitz in space and time, it is also interesting to consider the auxiliary blow-up for $U^\nu$ at a point $(\bar s,\bar x)$,
\[
  U^{\nu,\varepsilon}(\sigma,\xi):=\varepsilon^{-1} \left\{ U^\nu(\bar s+\varepsilon \sigma, \bar x + \varepsilon \xi)-U^\nu(\bar s,\bar x)\right\},\quad U^{\nu,\varepsilon}(0,0)=0.
\]
This auxiliary function is indeed intimately related to $U^\varepsilon$ since we have,
\begin{equation}\label{eq::n29}
U^{\nu,\varepsilon}(\sigma,\xi)=\sup_{\tau\in \R} \left\{U^\varepsilon(\tau,\xi)-A^\varepsilon(\tau-\sigma)\right\},\quad A^\varepsilon(q)=bq+ \varepsilon\frac{|q|^2}{2\nu},\quad b:=\frac{\bar t-\bar s}{\nu}.
\end{equation}
Moreover from  (\ref{eq::n26}), we have for all $\sigma \in I_{\bar s}^\eps$ and $\xi,\zeta\in \overline \Omega$,
\[
  \begin{cases}
    U^{\nu,\varepsilon}(\sigma, \zeta) -V^\varepsilon (\sigma, \zeta) \le 0,\\
    {U^{\nu,\varepsilon}(\sigma,\xi)-U^{\nu,\varepsilon}(\sigma,\zeta) \le L_\nu|\xi-\zeta|}
  \end{cases}
\]
and then for all $\sigma \in I_{\bar s}^\eps$, $\tau \in \R$ and $\xi,\zeta\in \overline \Omega$,
\begin{equation}\label{eq::n30}
\left\{\begin{array}{l}
{U^{\nu,\varepsilon}(\sigma,\xi)-V^\varepsilon(\sigma,\zeta) \le L_\nu|\xi-\zeta|}\\
U^{\nu,\varepsilon}(\sigma,\xi)\ge U^\varepsilon(\tau,\xi)-A^\varepsilon(\tau-\sigma)\\
\end{array}\right.
\end{equation}
where the last line follows from (\ref{eq::n29}).

Thanks to the uniform Lipschitz estimate for $U^\nu$ and the fact that $U^{\nu,\eps} (0,0)=0$, we can extract a subsequence (still denoted by $\varepsilon\to 0$) and get,
\[
  U^{\nu,\varepsilon} \to U^{\nu,0}\quad \text{locally uniformly on compact sets of $\R\times \overline \Omega$},\quad U^{\nu,0}(0,0)=0.
\]
We then define (along the already extracted subsequence) the following half-relaxed limits
$$\left\{\begin{array}{ll}
\displaystyle U^0:=\limsup_{\varepsilon\to 0}{}^* U^\varepsilon,&\quad U^0(0,0)\ge 0,\\
\displaystyle V^0:=\liminf_{\varepsilon\to 0}{}_* V^\varepsilon,&\quad V^0(0,0)\le 0.
\end{array}\right.$$
Passing to the limit in (\ref{eq::n30}), we get with $A^0(q)=bq$
$$\left\{\begin{array}{l}
{U^{\nu,0}(\sigma,\xi)-V^0(\sigma,\zeta)\le L_\nu|\xi-\zeta|}\\
U^{\nu,0}(\sigma,\xi)\ge U^0(\tau,\xi)-b(\tau-\sigma)\\
\end{array}\right.$$
which shows in particular that
\begin{equation}\label{eq::n31}
{U^0(\tau,\xi)-V^0(\sigma,\zeta)\le L_\nu|\xi-\zeta|+b(\tau-\sigma)},\quad U^0(0,0)=0=V^0(0,0).
\end{equation}
Moreover, passing to the limit in \eqref{lequation} thanks to the discontinuous stability of viscosity solutions, we get
\begin{equation}\label{eq::n32}
\left\{\begin{array}{rlrl}
\partial_\tau U^0 + H(\bar t,\bar x,\partial_\xi U^0) \le 0 &\quad \text{in}&\quad (\R\times \Omega)&\cap \left\{|U^0|< +\infty\right\}\\
\partial_\tau U^0 + \min(F,H)(\bar t,\bar x,\partial_\xi U^0) \le 0 &\quad \text{in}&\quad (\R\times \partial \Omega)&\cap \left\{|U^0|< +\infty\right\}\\
\\
-\bar \eta+\partial_\sigma V^0 + H(\bar s,\bar x,\partial_\xi V^0) \ge 0 &\quad \text{in}&\quad (\R\times \Omega)&\cap \left\{|V^0|< +\infty\right\}\\
-\bar \eta+\partial_\sigma V^0 + \max(F,H)(\bar s,\bar x,\partial_\xi V^0 ) \ge 0 &\quad \text{in}&\quad  (\R\times \partial \Omega)&\cap \left\{|V^0|< +\infty\right\}
\end{array}\right.
\end{equation}
with $\bar \eta:=\frac{\eta}{(T-\bar s)^2}$. We used the fact that $\alpha \partial_x g(\bar x)=\alpha \bar x = 0$.
\medskip

\noindent \textbf{Step 5: the 1D problem.}\\
We now define the following functions as supremum/infimum in time of the functions defined in $\R\times \overline \Omega$,
\[
  \overline u(\xi):=\sup_{\tau\in \R} \left\{U^0(\tau,\xi)-b\tau\right\},\quad \underline v(\xi):=\inf_{\tau\in \R} \left\{V^0(\tau,\xi)-b\tau\right\}.
\]
From \eqref{eq::n31}, these functions satisfy
\[
  -\infty\le -L_\nu|\zeta-\xi|+\overline u(\xi) \le \underline v(\zeta)\le +\infty ,\quad 0\le \overline u(0) \le \underline{v}(0)\le 0
\]
and then $\overline u(0) =0= \underline{v}(0)$.
Because of this Lipschitz inequality, this is also the case for their semi-continuous envelopes, \textit{i.e.} we have
\begin{equation}\label{eq::n33}
-\infty\le -L_\nu|\zeta-\xi|+\overline u^*(\xi) \le \underline v_*(\zeta)\le +\infty ,\quad \overline u^*(0) =0= \underline{v}_*(0).
\end{equation}
From (\ref{eq::n32}), we get (again from  stability) that these functions satisfy in particular for $\bar X:=(\bar t,\bar x)$ and $\bar Y:=(\bar s,\bar x)$
\begin{equation}\label{eq::n34}
\left\{\begin{array}{rlrl}
b + H(\bar X,\partial_\xi \overline u^*) \le 0 &\quad \text{in}&\quad (\R\times \Omega)&\cap \left\{|\overline u^*|< +\infty\right\}\\
b + \min(F,H)(\bar X,\partial_\xi \overline u^*) \le 0 &\quad \text{in}&\quad (\R\times \partial \Omega)&\cap \left\{|\overline u^*|< +\infty\right\}\\
\\
-\bar \eta+b + H(\bar Y,\partial_\xi \underline v_*) \ge 0 &\quad \text{in}&\quad (\R\times \Omega)&\cap \left\{|\underline v_*|< +\infty\right\}\\
-\bar \eta+b + \max(F,H)(\bar Y,\partial_\xi \underline v_*) \ge 0 &\quad \text{in}&\quad  (\R\times \partial \Omega)&\cap \left\{|\underline v_*|< +\infty\right\}.
\end{array}\right.
\end{equation}

\noindent {\bf Step 6: getting a contradiction from structural assumptions.}\\
We now apply Corollary \ref{cor:visco-ineq}. In order to do so, we consider
\[
\overline p:=\limsup_{\Omega\ni x\to 0} \frac{\overline u^*(x)}{x},\quad \underline p:=\liminf_{\Omega\ni x\to 0}
\frac{\underline v_*(x)}{x},\quad a:=\min(\underline p, \overline p),\quad b:=\max(\underline p, \overline p)
\]
and we get that there exists $p\in [a,b]\cap \R\not= \emptyset$ such that either
\[
b + H(\bar X,p)\le 0 < \bar \eta \le H(\bar Y,p)-H(\bar X,p)
\]
or
\[
b + \max(F,H)(\bar X,p)\le 0 < \bar \eta \le F(\bar Y,p)-F(\bar X,p).
\]
{One} of these facts are true along a subsequence $\nu \to 0$. 
In the first case, we get from the assumption on the Hamiltonian $H$, see \eqref{assum:H} ii), that,
\begin{align*}
\bar \eta \le H(\bar Y,p)-H(\bar X,p) & \le C|\bar t-\bar s|\left\{1+\max(0,H(\bar X,p))\right\}\\
&\le C|\bar t-\bar s|\left\{1+\max(0,-b)\right\}\\
&\le  C\left\{\frac{|\bar t-\bar s|^2}{\nu} + |\bar t-\bar s|\right\} \to  0 \quad \text{as}\quad \nu\to 0
\end{align*}
where we have used $\displaystyle b=\frac{\bar t-\bar s}{\nu}$ in the third line, and (\ref{eq::n35}) in the last line. Contradiction because $\bar \eta\ge \eta/T^2>0$.\\
From the assumption on the function $F$, see \eqref{assum:F} ii), we  get a similar contradiction in the second case,
\begin{align*}
\bar \eta \le F(\bar Y,p)-F(\bar X,p) &\le C|\bar t-\bar s|\left\{1+\max(0,\max(F,H)(\bar X,p))\right\}\\
&\le C|\bar t-\bar s|\left\{1+\max(0,-b)\right\}\\
&\le C\left\{\frac{|\bar t-\bar s|^2}{\nu} + |\bar t-\bar s| \right\} \to 0 \quad \text{as} \quad \nu \to 0.
\end{align*}
We conclude that $M\le 0$. Recalling that
\[ M = \sup_{t \in [0,T), x \in \overline \Omega} \left\{ u(t,x) - v(t,x) - \frac{\eta}{T-t} \right\} \le 0, \]
it is enough to let $\eta \to 0$ to get $u \le v$ as desired. 
\end{proof}


\bibliographystyle{plain}
\bibliography{bib-comparison.bib}

\end{document}